\newtheorem{theorem}{Theorem}[section]
\newtheorem{prop}[theorem]{Proposition}
\newtheorem{lem}[theorem]{Lemma}
\newtheorem{cor}[theorem]{Corollary}
\theoremstyle{plain}
\numberwithin{equation}{theorem}
\theoremstyle{remark}
\begin{document}
\bibliographystyle{plain}

\title{Primitivity of prime countable-dimensional regular algebras}
\date{\today}

\author{Pere Ara}
\address{Departament de Matem\`atiques, Universitat Aut\`onoma de Barcelona, 08193 Bellaterra (Barcelona), Spain}
\email{para@mat.uab.cat}

\author{Jason P.~Bell}
\thanks{The first-named author was supported by DGI MICIIN-FEDER
MTM2011-28992-C02-01, and by the Comissionat per Universitats i
Recerca de la Generalitat de Catalunya. 
The second-named author was supported by NSERC grant 31-611456.}

\subjclass[2010]{16E50 (primary), 16D60, 16N60 (secondary)}
\keywords{von Neumann regular rings, primitive rings, prime rings, idempotents, extended centroid, multiplier rings.}%
\address{Department of Pure Mathematics, University of Waterloo, Waterloo, Canada}
\email{jpbell@uwaterloo.ca}

\bibliographystyle{plain}

\begin{abstract} Let $k$ be a field and let $R$ be a countable dimensional prime von Neumann regular $k$-algebra.  We show that $R$ is primitive, answering a special case of a question of Kaplansky.
\end{abstract}
\maketitle

\section{Introduction}
A ring $R$ is \emph{von Neumann regular} if it has the property that for every $a\in R$ there is 
some $x\in R$ such that $a=axa$.  If we have $a=axa$ then $ax$ is necessarily idempotent and 
so one of the interesting consequences of this condition is that principal left and 
right ideals of $R$ are generated by idempotents.  Kaplansky \cite[p. 2]{K} asked whether 
being prime and being primitive are equivalent properties for von Neumann regular rings.

Fisher and Snider \cite{FS} showed that Kaplansky's question has an affirmative answer for a von Neumann regular 
ring $R$ if $R$ is either a countable ring or if $R$ has a countable downward cofinal set $\mathcal S$ 
of nonzero two-sided ideals (meaning that every nonzero two-sided ideal contains an element of $\mathcal S$). 
Domanov \cite{D} later constructed 
a counter-example by giving an example of a von Neumann regular prime-but-not-primitive group algebra.  
Further examples were later given by using Leavitt path algebra constructions \cite{ABR}.  

These counter-examples all have the property that they are uncountably infinite dimensional over their base fields and 
it is therefore natural to ask whether Kaplansky's question has an affirmative answer if one adds the additional 
hypothesis that the algebra be countable dimensional.  When the base field is countable this follows from 
Fisher and Snider's theorem since such algebras are necessarily countable as rings.  On the other hand, 
Fisher and Snider's work says nothing about von Neumann regular algebras over uncountable fields 
unless one has the countable cofinality condition on the set of two-sided ideals.

In this paper, we answer Kaplansky's question for countable dimensional rings.
\begin{theorem} Let $k$ be a field and let $R$ be a prime von Neumann regular $k$-algebra with ${\rm dim}_k(R)\le \aleph_0$.  Then $R$ is both left and right primitive.  
\label{thm: main}
\end{theorem}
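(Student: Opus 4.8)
The plan is to exhibit a maximal left ideal of $R$ whose core (the largest two-sided ideal it contains) is zero --- equivalently, a faithful simple left $R$-module --- and symmetrically on the right; since the argument is left--right symmetric I describe only the left case. Two situations are easy and I would dispatch them first. If $R$ has nonzero socle, then, $R$ being prime, a minimal left ideal $L$ is faithful: its annihilator is a two-sided ideal killing $L$, whereas the nonzero two-sided ideal $RLR$ does not, so primeness forces the annihilator to be $0$; being also simple, $L$ makes $R$ primitive. Moreover for regular rings the one-sided socles vanish simultaneously (a minimal left ideal has the form $Re$ with $eRe$ a division ring, and then $eR$ is a minimal right ideal), so we may henceforth assume $R$ has no minimal one-sided ideals. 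And if $R$ is simple then any maximal left ideal works, since its core is a proper two-sided ideal, hence $0$. So assume $R$ is not simple and has trivial socle.

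The heart of the matter should be a reduction to the Fisher--Snider theorem quoted in the introduction: I would try to show that a prime von Neumann regular $k$-algebra $R$ with $\dim_k R \le \aleph_0$ admits a countable downward-cofinal family of nonzero two-sided ideals. If $R$ has only countably many nonzero ideals this is automatic, so the real content is to extract a countable cofinal subfamily when there are uncountably many. The dimension hypothesis already forbids strictly descending chains of two-sided ideals of length $\omega_1$: from such a chain $\{I_\alpha\}_{\alpha<\omega_1}$ one picks $v_\alpha \in I_\alpha \setminus I_{\alpha+1}$, and reducing a hypothetical linear dependence $\sum c_i v_{\alpha_i}=0$ modulo the ideal $I_{\alpha_1+1}$ with $\alpha_1$ the smallest index occurring shows the $v_\alpha$ are $k$-linearly independent, contradicting $\dim_k R \le \aleph_0$. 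I would then run a transfinite construction of a strictly descending chain of nonzero ideals $\{J_\beta\}$: at a non-limit stage, if the chain built so far is not yet downward-cofinal, intersect it with a nonzero ideal witnessing this (the intersection is again nonzero since $R$ is prime) to obtain the next term. By the chain-length bound the process must halt at a countable ordinal, and when it halts because no such witness exists we have found our countable cofinal family, so Fisher--Snider applies and $R$ is primitive.

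The obstacle --- and the point where von Neumann regularity, the extended centroid, and multiplier rings must genuinely enter --- is the behaviour at limit stages: a priori a countable descending chain of nonzero ideals in a prime ring can intersect to $0$ without being cofinal (the integers, with $2^n\mathbb{Z}$ and $3\mathbb{Z}$, illustrate this outside the regular world), so the dimension bound alone does not close the induction. To control this I would pass, given a nonzero ideal $J$, to its multiplier ring $M(J)$: it contains $R$ as a subring and $J$ as an essential two-sided ideal, it is again prime, and, since $R$ and $M(J)$ share the nonzero ideal $J$ and primitivity of a prime ring is inherited by and from each of its nonzero ideals, $M(J)$ has exactly the same primitivity status as $R$. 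Using that the extended centroid $C$ of $R$ is a field with $\dim_k C \le \aleph_0$, one should be able to locate an arbitrary nonzero ideal relative to the chain inside such a completion and show that the ``bottom'' of the ideal lattice is attained countably often, thereby excluding the pathological limit stage. As a fallback I would keep in reserve the direct route: bypass Fisher--Snider and build the faithful simple module by a Baire-type induction over a countable $k$-basis of $R$, enlarging finitely generated left ideals by idempotents (regularity) while keeping clear of every nonzero two-sided ideal (primeness); there the difficulty migrates to forcing maximality of the limiting left ideal using only countably much data --- which is again precisely what the countable $k$-dimension, transmitted through the extended centroid, has to provide.
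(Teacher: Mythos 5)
You have correctly identified the overall strategy the paper also follows --- manufacture a countable downward cofinal family of nonzero two-sided ideals and invoke Fisher--Snider --- but the step that actually produces such a family is missing, and that step is the entire mathematical content of the paper. Your transfinite construction of a strictly descending chain fails exactly where you say it does: at a countable limit ordinal the intersection of the chain built so far may be zero while the chain is nowhere near cofinal, and the bound on chain length (which is fine) says nothing about this. Neither of the repairs you gesture at closes the gap. The assertion that $\dim_k C(R)\le\aleph_0$ is not available to you: under the present hypotheses it is \emph{equivalent} (Proposition~\ref{prop:recordin-equivs}) to the existence of the countable cofinal family you are trying to build, and in the paper it is deduced from that cofinality, so invoking it here is circular unless you prove it independently. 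What can be proved directly is weaker and different: after disposing of countable $k$ by Fisher--Snider, one shows (Lemma~\ref{lem:enoughidemps}, using $\dim_k R<|k|$, Lemma~\ref{lem:Morita-inv} and multiplier rings) that $C(R)$ is \emph{algebraic} over $k$. And the sentence about locating an arbitrary ideal ``inside $M(J)$'' and showing the ``bottom of the ideal lattice is attained countably often'' is a hope, not an argument; no mechanism is proposed.

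The missing idea is Lemma~\ref{lem: 1}: if $C(R)$ is algebraic over $k$, then for every finite-dimensional subspace $V\subseteq R$ there is a \emph{single} nonzero $r\in R$ lying in every two-sided ideal $I$ with $I\cap V\neq(0)$. Exhausting $R$ by finite-dimensional subspaces $V_1\subseteq V_2\subseteq\cdots$ and taking the principal ideals of the corresponding elements then gives the countable cofinal family outright, with no transfinite recursion and hence no limit-stage pathology (this is Theorem~\ref{thm:the-one}). The proof of that lemma is a genuinely nontrivial induction on $\dim V$, split according to whether $r_1xr_j=r_jxr_1$ for all $x\in R$; in the commuting case one uses the bimodule-homomorphism description of $C(R)$ and the finiteness of the field extension of $k$ generated by the resulting central elements. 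None of this appears in your sketch, and your fallback ``Baire-type'' construction is likewise only a sketch whose acknowledged sticking point (forcing faithfulness and maximality with countably much data) is the same unresolved issue in different clothing. Your preliminary reductions (nonzero socle, simple $R$) are correct but unnecessary once the cofinality argument is in place.
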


 Indeed, the class of algebras to which our argument applies is a much larger class than the class
of prime regular rings, namely the class of prime algebras such that every nonzero right ideal $I$ of $R$ contains a 
nonzero idempotent $e$ with the property that the center $Z(eRe)$ is a field.  Thus we shall show that such algebras 
are automatically primitive when $R$ is countable dimensional (Theorem \ref{thm:the-one}). The class    
of algebras $R$ such that every nonzero right ideal of $R$ contains a nonzero idempotent is very large. It contains,
amongst others, 
all exchange semiprimitive algebras \cite{Nic}, all Leavitt path algebras of arbitrary graphs with the property (L), see e.g. 
\cite[Theorem 1]{Ranga},
all Kumjian-Pask algebras of aperiodic row-finite higher-rank graphs without sources, see \cite[Proposition 4.9]{ACHR}, 
and all abelianized Leavitt path algebras $L^{{\rm ab}}_K(E,C)$ of finite bipartite separated graphs $(E,C)$ 
with condition (L) \cite[Theorem 10.10]{AE}.
The condition about the centers $Z(eRe)$ of the corner algebras holds in many examples, and is automatic when the prime algebra $R$ 
is centrally closed, which is the case for prime Leavitt path algebras of finite graphs, by \cite[Theorem 3.7]{CMMSS}.

The outline of the paper is as follows.  In Section 2, we give some background on the extended centroid and on multiplier rings.  We use 
this material to show that if $R$ is a countable dimensional prime von Neumann regular algebra (or satisfies the weaker hypothesis
alluded to in the previous paragraph), whose base field is uncountable, 
then the extended centroid of $R$ is an algebraic extension of the base field. 
This fact is then used in Section 3 to prove our main result.

\section{The extended centroid and multiplier rings}
Let $R$ be a semiprime ring (we do not assume that $R$ has identity).  The \emph{multiplier ring} $M(R)$ consists of all 
tuples $(f,g)$ where $f:R\to R$ is a left $R$-module homomorphism, $g:R\to R$ is a right $R$-module homomorphism and they satisfy the balanced condition
$f(x)y= xg(y)$ for all $x,y\in R$.  Addition in this ring is just given by ordinary coordinate-wise addition of maps and multiplication is given by the rule
$$(f_1,g_1)\cdot (f_2,g_2):=(f_2\circ f_1, g_1\circ g_2).$$
It is straightforward to check that with these operations $M(R)$ is a ring with identity given by $({\rm id},{\rm id})$.  Moreover, $R$ embeds 
as a subring in $M(R)$ via the rule $r\mapsto (\mathcal{R}_r,\mathcal{L}_r)$, where $\mathcal{L}_r$ and $\mathcal{R}_r$ are respectively left and right multiplication by $r$.

We need the following well-known lemma. We include a proof for the reader's convenience.

\begin{lem}
 \label{lem:Morita-inv} Let $e$ be an idempotent of a prime ring $R$. Then $Z(M(ReR))\cong Z(eRe)$.  
\end{lem}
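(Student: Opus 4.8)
Write $S:=ReR$. Since $R$ is prime, $S$ is a nonzero ideal and hence itself a prime --- in particular nondegenerate --- ring, so $M(S)$ and the embedding $S\hookrightarrow M(S)$, $r\mapsto(\mathcal{R}_r,\mathcal{L}_r)$, are available; one also checks at once that $e\in S$, $eSe=eRe$, and $S^2=S=SeS$ (write $aeb=(ae)(eb)=(ae)\,e\,(eb)$ with $ae,eb\in S$). The plan is to prove that evaluation at $e$ is the required isomorphism.

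\textbf{The homomorphism and its injectivity.} First I would show that a central $(f,g)\in Z(M(S))$ has $f$ and $g$ equal to a single $(S,S)$-bimodule endomorphism of $S$: commuting with each $(\mathcal{R}_s,\mathcal{L}_s)$ reads off as $f(xs)=f(x)s$ and $g(sx)=sg(x)$, which with the defining relations $f(sx)=sf(x)$, $g(xs)=g(x)s$ makes both $f,g$ into $(S,S)$-bimodule maps, and $f(xy)=f(x)y=xg(y)=g(xy)$ together with $S^2=S$ forces $f=g$. Then $f(e)=ef(e)e\in eSe=eRe$, and $f(e)c=f(ec)=f(c)=f(ce)=cf(e)$ for $c\in eRe$, so $f(e)\in Z(eRe)$; set $\Phi(f,g):=f(e)$. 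It is additive and unital, and multiplicative, since $\Phi\big((f_1,g_1)(f_2,g_2)\big)=f_2(f_1(e))=f_2(e)f_1(e)=f_1(e)f_2(e)$ --- the middle step by right $S$-linearity of $f_2$, the last because these values commute in $Z(eRe)$. For injectivity I would use that, for $S=ReR$, the first component of any multiplier is automatically left $R$-linear and the second right $R$-linear (via $aeb=(ae)(eb)$); hence if $(f,f)$ is central with $f(e)=0$, then $f(aeb)=a\,f(eb)=a\,f(e\cdot eb)=a\,f(e)\,(eb)=0$ for all $a,b\in R$, so $f=0$ and $\Phi$ is injective.

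\textbf{Surjectivity.} Given $z\in Z(eRe)$, I would define $f_z:S\to S$ by $f_z(s)=\sum_i x_izy_i$ whenever $s=\sum_i x_iy_i$ with $x_i\in Se$, $y_i\in eS$ (such decompositions exist because $SeS=S$). To see this is well defined, suppose $\sum_i x_iy_i=0$ and set $w=\sum_i x_izy_i$: for each $r\in R$ we have $erx_i\in eRe$, hence $z(erx_i)=(erx_i)z$, so $(er)w=\sum_i z(erx_i)y_i=z(er)\sum_i x_iy_i=0$; thus $(ReR)w=0$, and symmetrically $w(ReR)=0$, so $w=0$ by primeness of $R$. One then checks that $f_z$ is an $(S,S)$-bimodule endomorphism with $f_z(e)=eze=z$; being two-sided $S$-linear, $f_z(x)y=f_z(xy)=xf_z(y)$, so $(f_z,f_z)\in M(S)$. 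Finally $(f_z,f_z)$ is central: for $(f,g)\in M(S)$ and $s=\sum_i x_iy_i$ as above, left $S$-linearity of $f$ gives $f(y_i)\in eS$ and $f(zy_i)=zf(y_i)$, so both $f\circ f_z$ and $f_z\circ f$ carry $s$ to $\sum_i x_izf(y_i)$; symmetrically $f_z\circ g=g\circ f_z$; by the product rule these two identities are precisely commutation of $(f_z,f_z)$ with $(f,g)$. Hence $\Phi(f_z,f_z)=z$, and $\Phi$ is an isomorphism.

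\textbf{Where the difficulty lies.} Step 1 is formal once one observes that central multipliers are bimodule maps. The substance is the surjectivity: the naive preimage of $z$ --- the image of $z\in eRe\subseteq S$ inside $M(S)$ --- is not central, so $f_z$ must be built by hand, and its well-definedness is the single genuinely nonroutine point, being exactly where both hypotheses enter: centrality of $z$ in $eRe$, to slide $z$ past the corner elements $erx_i$ (and $y_ire$ for the other side), and primeness of $R$, to conclude $w=0$ from $(ReR)w=0=w(ReR)$.
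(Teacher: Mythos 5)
Your proof is correct and follows essentially the same route as the paper's: your map $\Phi(f,g)=f(e)$ coincides with the paper's compression $x\mapsto exe$ (since for central multipliers $xe=ex=exe=f(e)$), your injectivity argument is the same "kills $e$, hence kills $ReR$" computation, and your surjectivity construction $\sum x_i z y_i$ with its well-definedness check (slide $z$ past corner elements, then use primeness) is the paper's $f_x\big(\sum r_ies_i\big)=\sum r_ixs_i$ in slightly different clothing. The only difference is that you spell out verifications the paper leaves implicit (that central multipliers are bimodule maps and that $(f_z,f_z)$ is central), which is fine.
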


\begin{proof}
 Define $\phi \colon Z(M(ReR))\to Z(eRe)$ by $\phi (x) =exe$, where we identify 
 $ReR$ with a subring of $M(ReR)$ in the natural way. Observe that $xe\in ReR$ because $x\in M(ReR)$ and $e\in ReR$, and thus
$exe = e(xe) e\in  eRe$.  Since $x\in Z(M(ReR))$ we have $xe=ex=exe\in eRe$. Using this, we see that $\phi$ is a unital ring homomorphism.
If $\phi (x) =0$, then $xe= 0$ and so $x\Big( \sum_i r_ies_i\Big) = \sum _i r_i (xe)s_i = 0$ for all $r_i,s_i\in R$, which in turn implies that $x= 0$.
This shows that $\phi$ is injective. To show surjectivity, take $x\in Z(eRe)$ and define $f_x\colon ReR \to ReR$ by
$$f_x \Big( \sum_i r_ies_i\Big) = \sum _i r_i xs_i.$$ To show that $f_x$ is well-defined, suppose that $\sum _i r_ies_i =0$.
Then for all $t\in R$ we have, using that $x\in Z(eRe)$,
$$\Big( \sum_i r_i xs_i\Big) te= \sum _i r_ix(es_ite) = \Big( \sum _i r_ies_i\Big) (tex) = 0.$$  
This shows that $\sum _i r_ixs_i =0$. Thus $f_x$ is a well-defined $R$-bimodule homomorphism and thus $(f_x,f_x) \in Z(M(I))$.
Clearly $\phi ((f_x,f_x)) =x$. This concludes the proof.  
\end{proof}

The \emph{symmetric ring of quotients} of a semiprime ring $R$ can be defined as follows.  We let $\mathcal{T}$ denote the 
collection of triples $(f,g,I)$, where $I$ is an essential ideal of $R$ and $f,g:I\to R$ are maps with $f$ a left $R$-module homomorphism, $g$ a right $R$-module homomorphism 
and such that $f(x)y=xg(y)$ for all $x,y\in I$.  We put an equivalence relation $\sim$ on $\mathcal{T}$ by declaring 
that $(f_1,g_1,I_1)\sim (f_2,g_2,I_2)$ if $f_1$ and $f_2$ agree on $I_1\cap I_2$ and $g_1$ and $g_2$ agree on $I_1\cap I_2$.  It is straightforward 
to check that this gives an equivalence relation on $\mathcal{T}$ and we let $[(f,g,I)]$ denote the equivalence class 
of $(f,g,I)\in \mathcal{T}$.  We then define the symmetric ring of quotients, $Q_s(R)$, of $R$ to be the collection 
of equivalences classes $[(f,g,I)]$ with addition and multiplication defined by
\begin{equation}
[(f_1,g_1,I_1)]+[(f_2,g_2,I_2)]=[(h_1,h_2,I_1\cap I_2)]\end{equation}
and
\begin{equation}
[(f_1,g_1,I_1)]\cdot [(f_2,g_2,I_2)]=[(k_1,k_2,I_2 I_1)]
\end{equation} where
$h_1$ is the restriction of $f_1+f_2$ to $I_1\cap I_2$; $h_2$ is the restriction of $g_1+g_2$ to $I_1\cap I_2$; $k_1=f_2\circ f_1$ and 
$k_2=g_1\circ g_2$ on $I_2I_1$.   Then $Q_s(R)$ is a ring with identity given by the class $[({\rm id},{\rm id}, R)]$.  Furthermore, $R$ 
embeds in $Q_s(R)$ as a subring via the rule $r\mapsto [(\mathcal{R}_r,\mathcal{L}_r,R)]$, and we identify $R$ with its image in $Q_s(R)$.  
We define the \emph{extended centroid} of $R$, $C(R)$, to be the centre of $Q_s(R)$.   We note that if $R$ is a prime ring, then the extended centroid 
of $R$ is a field \cite[Corollary 2.1.9]{AM}.  

In the case of a prime ring $R$, the extended centroid has an alternative description as follows (see e.g. \cite[page 359]{A}).  
We let $\mathcal{B}$ denote the collection of pairs $(f,I)$, where $I$ is a nonzero ideal of $R$ and $f:I\to R$ is an $(R,R)$-bimodule 
homomorphism.  We place an equivalence relation on $\mathcal{B}$ by declaring that $(f,I)\sim (g,J)$ if $f=g$ on $I\cap J$ and we let $[(f,I)]$ 
denote the equivalence class of $(f,I)\in \mathcal{B}$.  Then the set of equivalence classes forms a ring with addition given 
by $[(f,I)]+[(g,J)]=[(f+g,I\cap J)]$ and $[(f,I)]\cdot [(g,J)] = [(f\circ g, JI)]$.  

We note that if $R$ is prime and $a,b\in R$ 
are nonzero with $axb=bxa$ for every $x\in R$ then we have a bimodule isomorphism $f:(a)\to (b)$ given by $\sum x_i a y_i \mapsto \sum x_i b y_i$.  
To check this it is sufficient to check that 
$\sum x_i a y_i = 0$ if and only if $\sum x_i b y_i=0$.  We check one direction; 
the other direction follows from symmetry.  Suppose that $\sum x_i a y_i =0$.  Then $$\left( \sum x_i a y_i\right)rb=0$$ for every $r\in R$.  Using the fact that $axb=bxa$ for every $x\in R$ gives $$\left( \sum x_i b y_i\right)ra=0$$ for every $r\in R$.  Since $a$ is nonzero and $R$ is prime, we see that 
$\sum x_i b y_i=0$.  
%
%
%
%
%
%

\begin{lem}
\label{lem:enoughidemps}
Let $k$ be a field and let $R$ be a prime $k$-algebra such that every nonzero ideal of $R$ contains a nonzero idempotent $e$
such that $Z(eRe)$ is a field.
  If ${\rm dim}_k(R)<|k|$ then the extended centroid of $R$ is an algebraic extension of $k$.
\label{lem: 0}
\end{lem}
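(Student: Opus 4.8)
The plan is to argue by contradiction: suppose $\lambda \in C(R)$ is transcendental over $k$, and derive a cardinality contradiction with $\dim_k(R) < |k|$. The element $\lambda$ is represented by a bimodule homomorphism $f : I \to R$ on some nonzero ideal $I$. By hypothesis, $I$ contains a nonzero idempotent $e$ with $Z(eRe)$ a field. I would then pass to the corner: the key point is that $\lambda$ restricts to a central element of (a ring of quotients of) $eRe$, and since $Z(M(ReR)) \cong Z(eRe)$ by Lemma \ref{lem:Morita-inv}, and $Z(eRe)$ is a field, one sees $\lambda$ acts on $eRe$ like an element transcendental over $k$ living inside the field $Z(eRe)$. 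Concretely, $exe \mapsto \lambda \cdot exe$ (via $f$, noting $ReR \subseteq I$ after shrinking, or using $eIe \neq 0$ by primeness) defines an element of $Z(eRe)$; call it $c$. Transcendence of $\lambda$ forces $c$ to be transcendental over $k$ as well, because $f$ being injective on nonzero ideals of a prime ring means no nonzero polynomial in $\lambda$ can kill $e$.

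The crux is the following classical cardinality estimate: if a $k$-algebra $A$ (here $A = eRe$, or its relevant localization) contains an element $c$ transcendental over $k$ such that $k(c) \subseteq A$, then $\dim_k(A) \geq |k|$. This is the standard argument (as in the proof that a finitely generated algebra which is a field is algebraic, or Amitsur's trick): the uncountably many — in fact $|k|$-many — elements $\{(c - \alpha)^{-1} : \alpha \in k\}$ of $k(c) \subseteq A$ are $k$-linearly independent, by the partial-fractions decomposition of rational functions. Hence $\dim_k(A) \geq |k|$. Since $eRe$ is a $k$-subspace of $R$ we get $\dim_k(R) \geq \dim_k(eRe) \geq |k|$, contradicting $\dim_k(R) < |k|$.

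So the steps, in order, are: (1) Take $\lambda \in C(R)$ transcendental over $k$, represented by $(f, I)$. (2) Find a nonzero idempotent $e \in I$ with $Z(eRe)$ a field. (3) Produce from $f$ and $e$ an element $c \in Z(eRe)$ "equal to $\lambda$ on the corner", and check $c$ is transcendental over $k$ — here one uses primeness (via the injectivity remark preceding the lemma, or Lemma \ref{lem:Morita-inv}) to transfer a polynomial relation for $c$ back to one for $\lambda$. (4) Since $Z(eRe)$ is a field containing $k(c)$, invoke the partial-fractions linear independence to get $\dim_k(Z(eRe)) \geq |k|$, hence $\dim_k(R) \geq |k|$, a contradiction.

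I expect step (3) — the clean transfer of transcendence between the central element $\lambda$ of the big ring of quotients and an honest element of the small corner algebra $eRe$ — to be the main technical obstacle, since it requires being careful about where $f$ is defined (one may need to replace $I$ by $eIe$ or by $ReR \cap I$, and check this is still a valid representative) and about how multiplication by $\lambda$ interacts with the isomorphism $Z(M(ReR)) \cong Z(eRe)$. Everything else is either the hypothesis, the already-proved Lemma \ref{lem:Morita-inv}, or the standard partial-fractions cardinality bound.
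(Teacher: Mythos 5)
Your proposal is correct, but it takes a genuinely different route from the paper's. The paper first establishes a structural fact: using the cofinal family of idempotent-generated ideals $I=ReR$ with $Z(eRe)$ a field, it writes $C(R)$ as a directed limit of the centers $Z(M(I))$ of multiplier rings (via embeddings $M(J)\to M(I)$ and Proposition 2.1.3 of \cite{AM}), identifies each $Z(M(I))$ with $Z(eRe)$ by Lemma \ref{lem:Morita-inv}, and only then applies the cardinality step --- the $k$-linear independence of $\{(t-\alpha)^{-1}:\alpha\in k\}$ --- inside each $Z(eRe)$; this last step is exactly your step (4). You instead argue element-wise by contradiction, using the bimodule-homomorphism description of $C(R)$ for prime rings (set up just before the lemma) and restricting a transcendental $\lambda=[(f,I)]$ to a corner. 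The technical step (3) you flag does go through cleanly, and with no shrinking of $I$ needed: applying the hypothesis to $I$ itself gives $e\in I$, so $ReR\subseteq I$; then $c:=f(e)$ satisfies $f(x)=f(e)x=xf(e)$ for all $x\in eRe$, so $c\in Z(eRe)$, and since $\lambda^i e=c^i$ in $Q_s(R)$, a nonzero polynomial relation $p(c)=0$ (with $c^0=e$) gives $p(\lambda)e=0$; as $\lambda$ is transcendental, $p(\lambda)$ is a nonzero, hence invertible, element of the field $C(R)$, forcing $e=0$, a contradiction. In particular your argument needs neither the multiplier-ring direct limit nor Lemma \ref{lem:Morita-inv}. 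What your route buys is a shorter, more elementary and self-contained proof of algebraicity of $C(R)$ over $k$; what the paper's route buys is the explicit description of $C(R)$ as a directed union of the fields $Z(e_iRe_i)$, which is reused later, for instance in the proof of (i)$\Rightarrow$(ii) of Proposition \ref{prop:recordin-equivs}.
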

\begin{proof} We have $C(R)=Z(Q_s(R))$. Let $\mathcal F $ be the family of nonzero ideals $I$ of $R$ of the form $I=ReR$, where $e$ is an idempotent in $R$ such that 
$Z(eRe)$ is a field.
If $J$ is a nonzero ideal of $R$, then by hypothesis there is $I\in \mathcal F$ such that $I\subseteq J$. Note that $I= I^2$ for all $I\in \mathcal F$.
Observe also that, if $I\subseteq J$ are idempotent nonzero ideals of a prime ring $T$, then there is a natural embedding $M(J)\to M(I)$, and this carries central elements 
to central elements. Hence, given a downward directed system of idempotent nonzero ideals, one gets a directed system of embeddings of their multiplier rings, and a directed system of 
embeddings of the centers of these.

By the above remarks and the proof of Proposition 2.1.3 of \cite{AM}, we may write $C(R)$ as 
$$Z\left( \varinjlim_{ I\in \mathcal F} M(I)\right),$$
where $\mathcal F$ is partially ordered by reverse inclusion. Since $Z\left( \varinjlim_{ I\in \mathcal F} M(I)\right)=  \varinjlim_{ I\in \mathcal F} Z(M(I))$, it is sufficient to show 
that $Z(M(I))$ is a field which is an algebraic extension of $k$ whenever $I\in \mathcal F $, as this will give 
that $C(R)$ is  a directed limit of algebraic field extensions of $k$.  Let $I\in \mathcal F $. Then, by definition, there is an idempotent $e$ in $I$ such that 
$I=ReR$ and $Z(eRe)$ is a field.  By Lemma \ref{lem:Morita-inv},
we have that $Z(M(I))$ is isomorphic as $k$-algebra to $Z(eRe)$, and thus $Z(M(I))$ is a field. 
Since ${\rm dim}_k(eRe)<|k|$, we have that $Z(M(I))$ is an algebraic extension 
of $k$. Indeed, if $t$ is a transcendental element of $Z(M(I))$, then $\{ (t-\alpha)^{-1} : \alpha \in k \}$
is a set of linearly independent elements of $Z(M(I))$, of cardinality $|k|$, which contradicts the fact that
${\rm dim}_k \, Z(M(I))=  {\rm dim}_k \,  Z(eRe)<|k|$. 
As $C(R)$ is a directed limit of algebraic extensions of $k$, we conclude that $C(R)$ is algebraic over $k$.
\end{proof}

\begin{cor}
\label{cor:extcenalgforprimereg}
 Let $k$ be a field and let $R$ be a prime regular $k$-algebra. If ${\rm dim}_k(R)<|k|$ then the extended centroid of $R$ is an algebraic extension of $k$.
 \end{cor}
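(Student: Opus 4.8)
The plan is to obtain the corollary as an immediate consequence of Lemma~\ref{lem:enoughidemps}: it suffices to check that a prime von Neumann regular $k$-algebra $R$ satisfies the hypothesis of that lemma, namely that every nonzero ideal of $R$ contains a nonzero idempotent $e$ with $Z(eRe)$ a field. Granting this, since $\dim_k(R)<|k|$, Lemma~\ref{lem:enoughidemps} applies verbatim and the extended centroid of $R$ is algebraic over $k$. So the whole argument reduces to two standard observations about corner rings together with one about centers of prime regular rings.

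For the existence of the idempotent: if $J$ is a nonzero ideal, pick $0\neq a\in J$ and $x\in R$ with $a=axa$; then $e:=ax$ is idempotent, $e\in J$, and $e\neq 0$ because $ea=axa=a\neq 0$. Next one records that the corner $eRe$ is again von Neumann regular (for $b\in eRe$, writing $b=byb$ with $y\in R$ gives $b=b(eye)b$ with $eye\in eRe$) and that $eRe$ is prime (for $a,b\in eRe$ one has $aRb=a(eRe)b$, since $a=eae$, $b=ebe$, and $e$ is the identity of $eRe$; hence $a(eRe)b=0$ forces $aRb=0$, so $a=0$ or $b=0$).

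The only real content is that the center of a prime von Neumann regular ring $S$ with identity is a field; we then apply this with $S=eRe$. Here $Z(S)$ is a commutative domain, since $zw=0$ with $z,w$ central gives $zSw=0$ and hence $z=0$ or $w=0$. Now take $0\neq z\in Z(S)$ and $x\in S$ with $z=zxz$. The key point is that the annihilator $\mathrm{ann}(z)=\{w\in S: zw=0\}$ is a two-sided ideal of $S$ (because $z$ is central, it is also the left annihilator and is absorbing on both sides), and $\mathrm{ann}(z)\cdot SzS=0$ with $SzS\neq 0$; by primeness $\mathrm{ann}(z)=0$. Since $z(1-xz)=z-zxz=0$ and $(1-zx)z=z-zxz=0$, this forces $xz=zx=1$, and the inverse of a central unit is central, so $z^{-1}=x\in Z(S)$. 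This completes the verification. I do not expect a genuine obstacle here: the argument is short, and the only points needing a little care are the bookkeeping that $eRe$ is prime and regular and the observation that $\mathrm{ann}(z)$ is two-sided, both of which are routine.
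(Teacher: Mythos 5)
Your proposal is correct and takes essentially the same route as the paper: the paper's proof of this corollary is exactly the reduction to Lemma~\ref{lem:enoughidemps}, citing as known the standard facts that every nonzero ideal of a regular ring contains a nonzero idempotent $e$ and that $eRe$ is then a prime regular ring with identity whose center is a field. You simply verify those standard facts in detail, and your verifications are accurate.
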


 \begin{proof}
  If $e$ is a nonzero idempotent of $R$ then $eRe$ is a nonzero prime regular ring, and so $Z(eRe)$ is a field.
  Therefore, the result follows from Lemma \ref{lem:enoughidemps}.
 \end{proof}

 \section{Countable dimensional prime regular algebras}

\begin{lem} Let $k$ be a field, let $R$ be a prime $k$-algebra, and let $V$ be a nonzero finite dimensional subspace of $R$.  Suppose that the extended centroid of $R$ is an algebraic extension of $k$.  Then there exists a nonzero element $r\in R$ such that $r\in I$ for all two-sided ideals $I$ such that $V\cap I\neq (0)$.
\label{lem: 1}
\end{lem}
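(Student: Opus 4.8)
The plan is to reduce the assertion to showing that $\bigcap_{0\neq v\in V}RvR$ is nonzero, and then to use the algebraicity of the extended centroid to replace that (a priori infinite) intersection of ideals by a finite one. The reduction is immediate: if $0\neq r$ lies in $RvR$ for every $0\neq v\in V$, then for any two-sided ideal $I$ with $V\cap I\neq(0)$ we may pick $0\neq v\in V\cap I$ and conclude $r\in RvR\subseteq I$. So fix a $k$-basis $v_1,\dots,v_n$ of $V$, work inside $Q:=Q_s(R)$ with $\Lambda:=C(R)=Z(Q)$, a field algebraic over $k$, and set $e:=\dim_\Lambda(\Lambda v_1+\cdots+\Lambda v_n)\geq 1$. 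After reordering we may assume that $v_1,\dots,v_e$ is a $\Lambda$-basis of this space, so $v_j=\sum_{i\leq e}c_{ji}v_i$ with $c_{ji}\in\Lambda$ for $j>e$. This is where the hypothesis enters: since each $c_{ji}$ is algebraic over $k$, the subfield $\Lambda_0:=k(c_{ji}:j>e,\ i\leq e)$ of $\Lambda$ is a \emph{finite} extension of $k$; fix a $k$-basis $\lambda_1,\dots,\lambda_N$ of $\Lambda_0$. For $v=\sum_j\alpha_jv_j\in V$ ($\alpha_j\in k$) put $\gamma_i(v):=\alpha_i+\sum_{j>e}\alpha_jc_{ji}\in\Lambda_0$, so that $v=\sum_{i\leq e}\gamma_i(v)v_i$ in $Q$; since $v_1,\dots,v_e$ are $\Lambda$-independent, $\big(\gamma_1(v),\dots,\gamma_e(v)\big)\neq 0$ precisely when $v\neq 0$.

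The engine is an ``isolation'' step for the $\Lambda$-independent tuple $v_1,\dots,v_e$. I would invoke the standard fact about prime rings (a form of Martindale's lemma on the extended centroid, cf. \cite{AM}) that a $\Lambda$-linearly independent tuple can be separated by bimodule operations: for each $i\leq e$ there are finitely many $a^{(i)}_l,b^{(i)}_l\in R$ and a nonzero $w_i\in R$ with $\sum_l a^{(i)}_l v_j b^{(i)}_l=\delta_{ij}w_i$ for $1\leq j\leq e$. Choosing $0\neq u\in\bigcap_{i\leq e}Rw_iR$ (nonzero as $R$ is prime) and writing $u=\sum_m p_mw_iq_m$, one may absorb the $p_m,q_m$ into the $a^{(i)}_l,b^{(i)}_l$ and so take $w_i=u$ for all $i$. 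Because $\Lambda$ is central in $Q$, for $j>e$ one gets $\sum_l a^{(i)}_l v_j b^{(i)}_l=\sum_{p\leq e}c_{jp}\delta_{ip}u=c_{ji}u$; hence for every $v=\sum_j\alpha_jv_j\in V$,
\[
\sum_l a^{(i)}_l\, v\, b^{(i)}_l=\gamma_i(v)\,u\in RvR\qquad(1\leq i\leq e).
\]
Using centrality of $\gamma_i(v)$ once more, $RvR$ contains $R\big(\gamma_i(v)u\big)R=\gamma_i(v)\cdot RuR$ for each $i$, so, with $L:=RuR$ a \emph{fixed} nonzero ideal of $R$, we obtain $RvR\supseteq\sum_{i\leq e}\gamma_i(v)\,L$ for every $v\in V$.

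Finally comes the ``finite descent''. Put $T:=\{z\in R:\Lambda_0z\subseteq L\}$. If $z\in T$ and $0\neq v\in V$, pick $i$ with $\gamma_i(v)\neq 0$; then $\gamma_i(v)^{-1}\in\Lambda_0$, so $\gamma_i(v)^{-1}z\in\Lambda_0z\subseteq L$, whence $z=\gamma_i(v)\big(\gamma_i(v)^{-1}z\big)\in\gamma_i(v)L\subseteq RvR$. Thus $T\subseteq\bigcap_{0\neq v\in V}RvR$, and it remains to see that $T\neq 0$. Since $L$ is a $k$-subspace of $R$ and the $\lambda_t$ span $\Lambda_0$ over $k$, we have $T=\bigcap_{t=1}^N\big(\lambda_t^{-1}L\cap R\big)$, a finite intersection of two-sided ideals of $R$; each $\lambda_t^{-1}L\cap R$ is a \emph{nonzero} ideal, because $\lambda_t^{-1}\in C(R)$ is defined on some nonzero ideal $I'$ of $R$ (that is, $\lambda_t^{-1}I'\subseteq R$), so $0\neq\lambda_t^{-1}(I'\cap L)\subseteq\lambda_t^{-1}L\cap R$. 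As $R$ is prime, a finite intersection of nonzero two-sided ideals is nonzero, so $T\neq 0$, and any nonzero $r\in T$ works.

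I expect the genuine difficulty to be exactly this reduction from the infinitely many ideals $RvR$ (the set $V\setminus\{0\}$ is infinite once $k$ is infinite) to a finite intersection: the isolation step is what converts the dependence of $RvR$ on $v$ into the single coefficient vector $\big(\gamma_i(v)\big)$ living in the \emph{finite} extension $\Lambda_0$, and the descent only works because algebraicity of $C(R)$ forces $\Lambda_0$ to be finite-dimensional over $k$. (Without it the conclusion fails, e.g. for $R=k[t]$ and $V=k\cdot1+k\cdot t$ over an infinite field $k$, where $\bigcap_{c\in k}(t-c)=(0)$.)
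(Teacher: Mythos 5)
Your proposal is correct, but it takes a genuinely different route from the paper's. The paper argues by induction on $\dim_k V$, with a dichotomy at each step: either some pair satisfies $r_jxr_1\neq r_1xr_j$ for some $x$, in which case the problem is split into two subspaces of smaller dimension (the span $W$ of the elements $r_ixr_1-r_1xr_i$ and the subspace $W'$ coming from the kernel condition) and finished by induction plus primeness; or else every $r_j$ is centrally dependent on $r_1$, in which case the bimodule maps $xr_1y\mapsto xr_jy$ give elements of $C(R)$ generating a finite extension $F$ of $k$, and a common-denominator element $b$ with $bF\subseteq R$ concludes. You instead work directly in $Q_s(R)$: you extract a $C(R)$-basis from the $v_i$, invoke the separation (isolation) theorem for $C$-linearly independent elements of a prime ring to isolate each basis vector by a multiplication operator with a common nonzero output $u$, and then run the same kind of common-denominator descent, with $T=\{z:\Lambda_0z\subseteq L\}$ playing the role of the paper's $b$ and $F$. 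The endgame is shared: algebraicity of $C(R)$ forces the relevant central coefficients into a finite extension of $k$, so the infinitely many ideals $RvR$ ($0\neq v\in V$) all contain a single finite intersection of nonzero ideals, which is nonzero by primeness; your $k[t]$ counterexample is likewise the paper's closing remark. What your route buys is a non-inductive and arguably more transparent argument which even yields the stronger conclusion $0\neq r\in\bigcap_{0\neq v\in V}RvR$; what it costs is the imported isolation lemma, which is indeed a standard result of Martindale type (see e.g. Beidar--Martindale--Mikhalev, \emph{Rings with Generalized Identities}, the results on $C$-independence in Chapter 2) but is not proved in your write-up and does not appear in the multi-element form you need in \cite{AM}, so the citation should be sharpened; the paper's induction on $\dim V$, using only the two-element Martindale dichotomy, is precisely a self-contained substitute for that lemma. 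The surrounding verifications in your argument (absorbing to replace the $w_i$ by a common $u$, centrality giving $\gamma_i(v)u\in RvR$, nonvanishing of each $\lambda_t^{-1}L\cap R$ via an ideal on which $\lambda_t^{-1}$ maps into $R$, and primeness making the finite intersection nonzero) are all correct.
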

\begin{proof}
We prove this by induction on the dimension of $V$.  If ${\rm dim}(V)=1$, there is nothing to prove.  We thus assume that the claim holds whenever ${\rm dim}(V)<n$ and we consider the case that ${\rm dim}(V)=n$ with $n\ge 2$.  Let $\{r_1,\ldots ,r_n\}$ be a basis for $V$ and let $\mathcal{S}$ denote the collection of two-sided ideals in $R$ with the property that $I\cap V\neq (0)$.  Then there exist maps
 $c_1,\ldots ,c_n: \mathcal{S}\to k$ such that for each $I\in \mathcal{S}$ we have
 $$\sum_{i=1}^n c_i(I) r_i \in I$$ and such that for each $I\in \mathcal{S}$ there exists some $i=i(I)\in \{1,\ldots ,n\}$ such that $c_i(I)\neq 0$.   We now consider two cases.
 \vskip 2mm
 {\em Case I:} there is some $j$ and some $x\in R$ such that $r_jxr_1\neq r_1xr_j$.
 \vskip 2mm
 We let $T$ denote the set of elements $(b_2,\ldots ,b_n)\in k^{n-1}$ such that 
 $$\sum_{i=2}^n b_i (r_i x r_1 - r_1 x r_i) = 0.$$  Then $T$ is a proper subspace of $k^{n-1}$, since $r_j x r_1-r_1xr_j\neq 0$.  
We let $$\mathcal{S}_0 = \{ I\in \mathcal{S}\colon (c_2(I),\ldots ,c_n(I))\in T\}.$$  For $I\in \mathcal{S}$ we have
 $$\left( \sum_i c_i(I) r_i \right) x r_1 - r_1 x\left( \sum_i c_i(I) r_i \right) \in I.$$  We may rewrite this element as
 $$\sum_{i=2}^n c_i(I) (r_i x r_1 - r_1 x r_i).$$  Furthermore, for $I\in \mathcal{S}\setminus \mathcal{S}_0$, 
 we have $I\cap W\neq (0)$, where $W$ is the span of $\{r_i x r_1 - r_1 x r_i\colon i=2,\ldots ,n\}$.  Let
 $$W':=\left\{ \sum_{i=1}^n b_i r_i \colon (b_1,b_2,\ldots ,b_n)\in k^n~{\rm and}~(b_2,\ldots ,b_n)\in T\right\}.$$ 
 It follows that if $I\in \mathcal{S}$ then either $I\cap W\neq (0)$ or $I\cap W'\neq (0)$.  Notice that $W'$ has dimension $1+{\rm dim}(T)<n$, since $T$ is a proper subspace of $k^{n-1}$.  
 
 Since both $W$ and $W'$ are nonzero and have dimension less than $n$, the inductive hypothesis gives that there exist nonzero elements 
 $a,b\in R$ such that  either $a$ or $b$ is in $I$ whenever $I\in \mathcal{S}$.   Since $R$ is prime, there exists some $y\in R$ such that 
 $ayb\neq 0$.  We thus see that $ayb\in I$ whenever $I\cap V\neq (0)$ giving us the claim in this case.
 \vskip 2mm
 {\em Case II:} $r_1xr_j=r_j x r_1$ for all $x\in R$ and all $j=2,3,\ldots , n$.
 \vskip 2mm
We use the description of $C(R)$ as equivalence classes of 
 bimodule homomorphisms.  We let $F$ denote the extension of $k$ generated by the elements $[(f_j, (r_j))]$ for $j=1,\ldots ,n$, 
 %
%
%
%
 %
 %
 where $f_j:(r_1)\to R  $ is given by $x r_1 y \mapsto xr_jy$ and extending via linearity.  We must check that each $f_j$ is well-defined.   To see this, it is sufficient to check that whenever the expression 
 $\sum_i x_i r_1 y_i$ is equal to zero, we necessarily have 
 $\sum_i x_i r_j y_i=0$.  Thus, let us suppose towards a contradiction, that we can find elements $x_1,\ldots ,x_d,y_1,\ldots ,y_d\in R$ such that
 $$\sum_{i=1}^d x_i r_1 y_i =0\qquad {\rm and}\qquad \sum_{i=1}^d x_i r_j y_i \neq 0.$$
 Since $R$ is prime, there exists $y\in R$ such that $$\left(\sum_{i=1}^d x_i r_j y_i \right)y r_1\neq 0.$$
 But we have $r_j y_i y r_1 = r_1 y_i y r_j$ for all $i$ and $j$, and so
 $$0=\left(\sum_{i=1}^d x_i r_1 y_i \right)y r_j= \left(\sum_{i=1}^d x_i r_j y_i \right)y r_1\neq 0,$$ which is a contradiction.  Thus we see that the maps $f_j$ are well-defined.  
 
 Since the extended centroid of $R$ is algebraic, and $F$ is finitely generated as a field extension of $k$, $F$ is a finite extension  of $k$.  We fix a basis $\{h_1,\ldots ,h_m\}$ for $F$ as a $k$-vector space.  For each $i\in \{1,\ldots ,m\}$ there is a nonzero element $b_i$ of $R$ with the property that $h_i$ sends $b_i$ into $R$.  We pick a nonzero $b\in R$ that is in the intersection of the ideals generated by $b_1,\ldots ,b_m$.  Then by construction, $bF\subseteq R$.  Notice that for $I\in \mathcal{S}$, we have
 $$\sum_i c_i(I)r_i\in I.$$  We may rewrite this as
 $$\left(\sum_i c_i(I)f_j\right)(r_1)\in I.$$  Equivalently, there is a map $\beta: \mathcal{S}\to F\setminus \{0\}$ such that $\beta(I)r_1 \in V\cap I$.  Since $\beta(I)^{-1}\in F$, we have that $b\cdot \beta(I)^{-1}\in R$.  Furthermore, $R$ is prime and hence there is some nonzero element $t$ such that $btr_1\neq 0$.  Then $(b\cdot \beta(I)^{-1} t)(\beta(I) r_1) = btr_1\in I$ whenever $I\cap V\neq (0)$ giving us the claim in the remaining case.
\end{proof}
We note that the conclusion to the statement of Lemma \ref{lem: 1} need not hold if the extended centroid of $R$ is not an algebraic extension of the base field.  As a simple example, let $k$ be an infinite field and take $R=k[x]$ and let $V=kx+k$.  Then for each $\alpha\in k$, we have $R(x-\alpha)\cap V\neq (0)$, but the intersection of $R(x-\alpha)$ as $\alpha$ ranges over $k$ is zero.

We now prove our main theorem.  In the proof, we make use of a result of Fisher and Snider that guarantees primitivity of a prime ring under 
the conditions that each nonzero {\it right} ideal of the ring has a nonzero idempotent and such that there is a countable downward cofinal set of nonzero ideals.  
We recall that a set $\mathcal{S}$ of nonzero ideals in a ring $R$ is \emph{downward cofinal} if each nonzero ideal of $R$ contains an ideal in $\mathcal{S}$.
\begin{theorem} 
\label{thm:the-one}
Let $k$ be a field and let $R$ be a prime $k$-algebra  such that each nonzero right ideal of $R$
contains a nonzero idempotent $e$ such that $Z(eRe)$ is a field. Suppose that ${\rm dim}_k(R)\le \aleph_0$.  
Then $R$ is both left and right primitive.  
\end{theorem}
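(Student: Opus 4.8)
The plan is to reduce the statement to the theorem of Fisher and Snider recalled above: a prime ring in which every nonzero right ideal contains a nonzero idempotent, and which possesses a countable downward cofinal set of nonzero two-sided ideals, is right primitive. By hypothesis $R$ is prime and its nonzero right ideals contain nonzero idempotents, so the only missing ingredient is a countable downward cofinal set of nonzero ideals; producing one is the bulk of the work, after which Fisher--Snider yields right primitivity and a passage to $R^{\mathrm{op}}$ yields left primitivity.

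I would begin by splitting on the cardinality of $k$. If $|k|\le\aleph_0$, then $\dim_k(R)\le\aleph_0$ forces $R$ to be countable as a set, and the family $\{RaR : a\in R,\ a\neq 0\}$ is already a countable set of nonzero ideals which is downward cofinal (every nonzero ideal contains some nonzero $a$, hence contains $RaR$); so in this case Fisher--Snider applies at once. If $|k|>\aleph_0$, then $\dim_k(R)\le\aleph_0<|k|$; since every nonzero two-sided ideal of $R$ is in particular a nonzero right ideal, it contains a nonzero idempotent $e$ with $Z(eRe)$ a field, so $R$ satisfies the hypotheses of Lemma~\ref{lem:enoughidemps}, and I conclude that the extended centroid $C(R)$ is an algebraic extension of $k$. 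This is the decisive point at which the dimension hypothesis is used.

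Granting algebraicity of $C(R)$, I would fix a $k$-basis $v_1,v_2,\dots$ of $R$ (finite or countably infinite), put $V_n=kv_1+\cdots+kv_n$, and apply Lemma~\ref{lem: 1} to each finite dimensional subspace $V_n$ to obtain a nonzero element $r_n\in R$ lying in every two-sided ideal that meets $V_n$ nontrivially. Then $\mathcal S:=\{Rr_nR : n\ge 1\}$ is a countable family of nonzero ideals ($Rr_nR\neq 0$ by primeness), and it is downward cofinal: any nonzero ideal $J$ contains a nonzero element $x$, which lies in some $V_n$, so $V_n\cap J\neq (0)$, whence $r_n\in J$ and $Rr_nR\subseteq J$. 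I expect this extraction of a \emph{countable} cofinal family, from the a priori uncountable supply of principal ideals of $R$, to be the crux of the proof; it is exactly what Lemma~\ref{lem: 1} is engineered to deliver, and the example following Lemma~\ref{lem: 1} shows that it really does require algebraicity of the extended centroid.

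Finally I would assemble everything. In both cases $R$ has a countable downward cofinal set of nonzero two-sided ideals, so Fisher--Snider gives that $R$ is right primitive. For left primitivity I would first check that, in any ring, the property ``every nonzero right ideal contains a nonzero idempotent'' passes to left ideals: if $L$ is a nonzero left ideal and $0\neq a\in L$, then $aR$ is a nonzero right ideal (by primeness) and so contains a nonzero idempotent, say $at$ with $atat=at$; a short computation gives $(ta)^3=t(atat)a=t(at)a=(ta)^2$, hence $g:=(ta)^2=tata$ is idempotent, $g=(tat)a\in L$, and $g\neq 0$ since $g=0$ would force $ata=atata=0$ and then $at=atat=0$, contradicting $at\neq 0$. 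Now $R^{\mathrm{op}}$ is again a prime $k$-algebra, shares its two-sided ideals with $R$ (so $\mathcal S$ remains a countable downward cofinal set for it), and has the nonzero left ideals of $R$ as its nonzero right ideals, each containing a nonzero idempotent by the previous observation; hence Fisher--Snider applies to $R^{\mathrm{op}}$, showing that it is right primitive, i.e.\ that $R$ is left primitive.
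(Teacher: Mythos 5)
Your proposal is correct and follows essentially the same route as the paper: reduce to the countable case or to uncountable $k$, use Lemma \ref{lem:enoughidemps} to get algebraicity of the extended centroid, then apply Lemma \ref{lem: 1} to an exhausting chain of finite-dimensional subspaces to manufacture a countable downward cofinal set of nonzero ideals and invoke Fisher--Snider. The only (harmless) deviations are cosmetic: for countable $k$ the paper simply cites the ``countable ring'' form of Fisher--Snider rather than exhibiting the cofinal family $\{RaR\}$, and the paper takes left primitivity directly from Fisher--Snider instead of your explicit $R^{\mathrm{op}}$/idempotent argument, which is nonetheless a correct extra verification.
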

\begin{proof} If $k$ is a countable field, then $R$ is a countable ring and hence $R$ is right and left primitive by  \cite[Theorem 1.1]{FS}.  
Thus it is no loss of generality to assume that $k$ is uncountable.  Thus in this case we have that the extended centroid 
of $R$ is an algebraic extension of $k$ 
by Lemma \ref{lem:enoughidemps}.

Since $R$ is countably-infinite or finite dimensional, there exists an increasing sequence of finite-dimensional nonzero subspaces $$V_1\subseteq V_2 \subseteq \cdots $$ such that
$$R=\bigcup_i V_i.$$  
By Lemma \ref{lem: 1}, there exist nonzero elements $a_i\in R$ such that $a_i\in I$ whenever $I\cap V_i\neq (0)$.  
We let $J_i=(a_i)$ denote the two-sided ideal generated by $a_i$.  Then since every nonzero ideal in $R$ must intersect 
some $V_i$ we see that the collection $\{J_i\colon i\ge 1\}$ is a countable downward cofinal set of ideals in $R$.  Thus by 
Fisher and Snider \cite[Theorem 1.1]{FS}, we see that $R$ is both left and right primitive.
\end{proof}

\medskip

We can now obtain a proof of Theorem \ref{thm: main}.

\medskip

{\it Proof of Theorem \ref{thm: main}:}  Recall that $Z(eRe)$ is a field for every nonzero idempotent
$e$ of a prime von Neumann regular ring $R$. Hence the proof follows immediately from Theorem \ref{thm:the-one}.
\qed

\medskip

Since finitely generated algebras are necessarily at most countably infinite dimensional over their base fields, 
we obtain the following result as an immediate corollary of Theorem \ref{thm:the-one}.
\begin{cor} Let $k$ be a field and let $R$ be a finitely generated prime $k$-algebra such that each nonzero right ideal of $R$
contains a nonzero idempotent $e$ such that $Z(eRe)$ is a field.  Then $R$ is both left and right primitive.  
\end{cor}

We close by recording the equivalence between various of the hypothesis that have played a role in the proof of our results.

\begin{prop}
 \label{prop:recordin-equivs}
 Let $k$ be an uncountable field and let $R$ be a prime countable dimensional $k$-algebra such that each nonzero 
 (two-sided) ideal
 of $R$ contains a nonzero idempotent. Then the following conditions are equivalent:
 \begin{enumerate}
  \item The extended centroid $C(R)$ is an algebraic field extension of $k$.
  \item ${\rm dim}_k\, C(R) \le \aleph_0$.
  \item For each nonzero idempotent $e$ of $R$ we have that $Z(eRe)$ is a field.
  \item Each nonzero ideal of $R$ contains a nonzero idempotent $e$ such that $Z(eRe)$ is a field.
   \end{enumerate}
\end{prop}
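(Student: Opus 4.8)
The plan is to prove Proposition \ref{prop:recordin-equivs} by establishing a cycle of implications, exploiting the results already proved in the paper. First, $(1)\Rightarrow(2)$ is immediate: an algebraic field extension $F/k$ has a basis of monomials in the generators, and since $R$ is countable dimensional, $C(R)$ embeds in $Q_s(R)$ which is built from bimodule maps on ideals of $R$; more directly, $C(R)$ is a directed union of the fields $Z(M(I))\cong Z(eRe)$ appearing in the proof of Lemma \ref{lem:enoughidemps}, each of countable dimension, so $C(R)$ is countable dimensional over $k$. Next, $(2)\Rightarrow(1)$: if $C(R)$ contained a transcendental element $t$, then $\{(t-\alpha)^{-1}:\alpha\in k\}$ would be a $k$-linearly independent subset of $C(R)$ of cardinality $|k|>\aleph_0$, contradicting $(2)$; this is the same trick used inside the proof of Lemma \ref{lem: 0}.

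For $(3)\Rightarrow(4)$ there is nothing to do, since $(4)$ is the hypothesis of $(3)$ applied to the idempotent guaranteed inside a nonzero ideal by the standing assumption that every nonzero ideal contains a nonzero idempotent. The implication $(4)\Rightarrow(1)$ is exactly the content of Lemma \ref{lem:enoughidemps} (noting that $\dim_k(R)\le\aleph_0<|k|$ since $k$ is uncountable). So the cycle will be closed once I prove $(1)\Rightarrow(3)$, or equivalently $(2)\Rightarrow(3)$.

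I expect $(1)\Rightarrow(3)$ to be the main obstacle. Here is the idea: let $e$ be a nonzero idempotent of $R$. The corner ring $eRe$ is again a prime $k$-algebra, and its center $Z(eRe)$ injects (as a $k$-algebra) into the extended centroid $C(R)$ — indeed $Z(eRe)\cong Z(M(ReR))$ by Lemma \ref{lem:Morita-inv}, and $M(ReR)$ sits inside $Q_s(R)$ with central elements going to central elements, via the directed-limit description used in the proof of Lemma \ref{lem:enoughidemps} (the ideal $ReR$ is idempotent and nonzero, hence essential in the prime ring $R$). Under this embedding $Z(eRe)$ is a commutative domain that is an integral subdomain of the field $C(R)$; since by $(1)$ every element of $C(R)$ is algebraic over $k$, every element of $Z(eRe)$ is algebraic over $k$, and an integral domain that is algebraic over a field is itself a field. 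Hence $Z(eRe)$ is a field, giving $(3)$.

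The only delicate point is making precise the claim that $Z(eRe)$ embeds in $C(R)$ as a $k$-subalgebra. One clean way: $Z(eRe)\cong Z(M(ReR))$ by Lemma \ref{lem:Morita-inv}; by the argument in the proof of Lemma \ref{lem:enoughidemps}, for any nonzero idempotent ideal $J\subseteq I$ of the prime ring $R$ there is a natural embedding $M(I)\hookrightarrow M(J)$ carrying centre to centre, and $C(R)=Z(\varinjlim_{I\in\mathcal F}M(I))=\varinjlim_{I}Z(M(I))$ where $\mathcal F$ runs over ideals of the form $ReR$ with $Z(eRe)$ a field — but the same directed-limit reasoning applies over \emph{all} nonzero idempotent ideals, so in particular $Z(M(ReR))$ maps into $C(R)$ for the given $e$ (whether or not we already know $Z(eRe)$ is a field). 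Since every map in sight is a $k$-algebra homomorphism and $C(R)$ is a field, this map is injective. That places $Z(eRe)$ inside the field $C(R)$, and the algebraicity hypothesis finishes the argument. Writing out this directed-system bookkeeping carefully is the part that requires attention; everything else is formal.
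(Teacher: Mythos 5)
Your implications (2)$\Rightarrow$(1), (3)$\Rightarrow$(4), (4)$\Rightarrow$(1) and (1)$\Rightarrow$(3) are fine and essentially follow the paper's own route (for (1)$\Rightarrow$(3) the paper simply notes $Z(eRe)\subseteq C(R)$ and that an algebraic domain over a field is a field; your directed-limit bookkeeping is a legitimate way to justify that inclusion, though ``a $k$-algebra map into a field is injective'' is not by itself a reason --- injectivity comes from the fact that the transition maps $M(I)\to M(J)$ for $J\subseteq I$ idempotent nonzero ideals of a prime ring are embeddings, or directly from primeness as in the proof of Lemma \ref{lem:Morita-inv}).

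The genuine gap is in (1)$\Rightarrow$(2). Your argument never uses hypothesis (1), and the inference you rely on --- $C(R)$ is a directed union of the countable-dimensional fields $Z(M(ReR))\cong Z(eRe)$, hence countable dimensional --- is invalid: the directed family of ideals $ReR$ is indexed by the idempotents of $R$, which may be uncountably many, and a directed union of countable-dimensional subspaces over an uncountable index set need not be countable dimensional (every vector space is the directed union of its finite-dimensional subspaces). Likewise ``$C(R)$ embeds in $Q_s(R)$'' gives no bound, and in general countable dimensionality of $R$ does \emph{not} bound $\dim_k C(R)$: for $R=k[x]$ one has $C(R)=k(x)$, of dimension $|k|$ over $k$ (this example violates the idempotent hypothesis, but it kills the general principle you are appealing to). The paper closes this gap precisely by using (1): since $C(R)$ is algebraic over $k$ and $\dim_k R\le\aleph_0$, Lemma \ref{lem: 1} applied to an exhaustion of $R$ by finite-dimensional subspaces (as in the proof of Theorem \ref{thm:the-one}) produces a \emph{countable} downward cofinal family of nonzero ideals $J_i$; each $J_i$ contains a nonzero idempotent $e_i$ by the standing hypothesis, so the ideals $Re_iR$ form a countable cofinal family of idempotent ideals, and then $C(R)=\varinjlim_i Z(M(Re_iR))\cong\varinjlim_i Z(e_iRe_i)$ is a \emph{countable} directed union of countable-dimensional subspaces, hence of dimension at most $\aleph_0$. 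Without this use of (1) to obtain countability of the cofinal family, the equivalence of (2) with the other conditions is not established.
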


\begin{proof}
 (i)$\implies $(ii). As in the proof of Theorem \ref{thm:the-one}, $R$ has a countable downward cofinal
 chain of two-sided ideals $J_i$. Since each contains a nonzero idempotent, we can obtain from them 
 a countable chain of idempotent-generated ideals $Re_iR$ (with $e_i^2=e_i$), cofinal among all nonzero ideals of $R$.
 We have that 
 $$C(R) = \varinjlim_i \,  ZM(Re_iR) .$$
  Since $ZM(Re_iR)\cong Z(e_iRe_i)$ by Lemma \ref{lem:Morita-inv}, we get that $\text{dim}_k (ZM(Re_iR))\le \aleph_0$,
 and consequently $\text{dim}_k \,  C(R) \le \aleph_0$. 
 
  (ii)$\implies $ (i). This follows from the facts that $C(R)$ is a field extension of $k$ and that $k$ is an uncountable field
  (cf. the last part of the proof of Lemma \ref{lem:enoughidemps}). 
 
 (i)$\implies $(iii). Let $e$ be a nonzero idempotent of $R$. The extension $k\subseteq Z(eRe)$ is algebraic, because
 $Z(eRe)\subseteq C(R)$. Since $Z(eRe)$ is a domain, it follows that it must be a field. 
 
 (iii)$\implies $(iv). Obvious.
 
 (iv)$\implies $(i). This follows from Lemma \ref{lem:enoughidemps}.
 \end{proof}

\section*{Acknowledgments}
We thank Efim Zelmanov, who suggested this question to us during a ring theory meeting in Poland.  We also thank the anonymous referee for carefully reading the paper and for providing many helpful suggestions.

\end{document}